\newtheorem{thm}{Theorem}[section]
\newtheorem{cor}[thm]{Corollary}
\newtheorem{lem}[thm]{Lemma}
\theoremstyle{definition}
\newtheorem{defn}[thm]{Definition}
\theoremstyle{remark}
\newtheorem{rem}[thm]{Remark}
\numberwithin{equation}{section}
\newtheorem{exmp}{Example}[section]
\begin{document}
\title[Enriched Contraction mappings in $2$-normed spaces]{FIXED POINTS OF
ENRICHED CONTRACTION MAPPINGS IN $2$-NORMED SPACES}
\author{Rizwan Anjum}
\address{Department of Mathematics, Riphah Institute of Computing and Applied Sciences, Riphah International University, Lahore, Pakistan.}
\email{rizwan.anjum@riphah.edu.pk}
\author{Mujahid Abbas}
\address{Department of Mathematics, Government College University Katchery
Road, Lahore, Pakistan and Department of Mathematics and Applied
Mathematics, University of Pretoria Hatfield 002, Pretoria, South Africa}
\email{abbas.mujahid@gmail.com }
\thanks{}
\thanks{}
\date{}
\dedicatory{}

\begin{abstract}
Very recently, Berinde and P\u{a}curar obtained in [V. Berinde and M.  P\u{a}curar, Approximating fixed points of
enriched contractions in Banach spaces. Journal of Fixed Point Theory and
Applications.  \textbf{22}(2) (2020), 1--10.] an interesting
generalization of the Banach contractive mapping principle  in
the framework of Banach spaces. The aim of this paper is to prove some fixed
point theorems which extend the results of Berinde and P\u{a}curar  to the case of $2$%
-normed spaces.

\vspace{0,1cm} \noindent 2020 \textit{Mathematics Subject Classification.}
46B20. 47H09. 47H10.

\vspace{0,1cm} \noindent \textit{Keywords: Fixed point, enriched contraction, $2$%
-normed space }

%\vspace{0,1cm} \noindent \textit{ Corresponding author* Rizwan Anjum}
\end{abstract}

\maketitle

\commby{}

\section{Introduction and Preliminaries}

Let $(X,d)$ be a metric space and let $T:X\rightarrow X$ be a self-mapping.
We denote the set $\{x\in X:Tx=x\}$ of \emph{fixed points} of $T$ by
$Fix(T)$. Define $T^{0}=I$ (the identity map on $X$) and $%
T^{n}=T^{n-1}\circ T$, called the $n^{th}$ \emph{iterate of} $T$ for $n\geq
1 $. Let $x_{0}\in X$ be an initial point in order to approximate the
solution of a functional equation $Tx=x$ in $X.$ A sequence $%
\{x_{n}\}_{n=0}^{\infty } $ in $X$ is called a \emph{Picard iteration
associated with} $T$ if%
\begin{equation*}
x_{n}=T^{n}x_{0},\ \ n=1,2,\dotsc
\end{equation*}%
\newline
A mapping $T:X\rightarrow X$ is called a \emph{Banach contraction} if there
exists $k\in \lbrack 0,1)$ such that, for all $x,y\in X$, we have
\begin{equation}
d(Tx,Ty)\leq k\,d(x,y).  \label{ban}
\end{equation}%
If there exists $x^{\ast }$ in $X$ such that $Fix(T)=\{x^{\ast }\}$
and the Picard iteration associated with $T$ converges to $x^{\ast }$ for
any initial point $x_{0}$ in $X$, then $T$ is called a \emph{Picard operator}
(see \cite{G-book,Rus}).

Banach fixed point theorem reads as follows.

\begin{thm}
\label{mathematics}\cite{10} Let $(X,d)$ be a complete metric space and let $%
T:X\rightarrow X$ be a Banach contraction. Then $T$ is a Picard operator.
\end{thm}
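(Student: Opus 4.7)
The plan is to follow the classical strategy: starting from an arbitrary $x_0 \in X$, I construct the Picard iterates $x_n = T^n x_0$, show that $\{x_n\}$ is Cauchy, exploit completeness to extract a limit, and finally verify that the limit is the unique fixed point of $T$ and is independent of the choice of $x_0$.

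The first step is a telescoping estimate. By induction on $n$, using (\ref{ban}) at each step, I obtain
\begin{equation*}
d(x_{n+1},x_n) = d(T x_n, T x_{n-1}) \leq k\, d(x_n,x_{n-1}) \leq \dotsb \leq k^n d(x_1,x_0).
\end{equation*}
Combining this with the triangle inequality and summing a geometric series, for $m > n$ I get
\begin{equation*}
d(x_m, x_n) \leq \sum_{j=n}^{m-1} k^j\, d(x_1,x_0) \leq \frac{k^n}{1-k}\, d(x_1,x_0),
\end{equation*}
which tends to $0$ as $n \to \infty$ because $k \in [0,1)$. Hence $\{x_n\}$ is Cauchy in $(X,d)$.

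By completeness, there exists $x^* \in X$ with $x_n \to x^*$. To identify $x^*$ as a fixed point, I note that (\ref{ban}) makes $T$ Lipschitz continuous, so $T x_n \to T x^*$; but $T x_n = x_{n+1} \to x^*$, and uniqueness of limits yields $T x^* = x^*$. For uniqueness, if $y^*$ is any other fixed point, then
\begin{equation*}
d(x^*, y^*) = d(T x^*, T y^*) \leq k\, d(x^*, y^*),
\end{equation*}
forcing $d(x^*, y^*) = 0$ since $k < 1$. Since the Cauchy argument above started from an \emph{arbitrary} $x_0$, the Picard iteration converges to $x^*$ from every initial point, so $Fix(T) = \{x^*\}$ and $T$ is a Picard operator.

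There is no single deep obstacle here; the only mildly delicate point is the geometric-series bound, which must be uniform in $m$ so that the Cauchy property is visible. Everything else reduces to iterating the contraction inequality and invoking completeness.
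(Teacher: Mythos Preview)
Your argument is correct and is exactly the classical proof of the Banach contraction principle. Note, however, that the paper does not supply its own proof of this theorem: it is stated as a cited result (from \cite{10}) in the preliminaries, so there is no in-paper proof to compare against. Your write-up would serve perfectly well as the standard justification.
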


In many cases (such as solving nonlinear functional equations, optimization
problems, variational inequalities problems, split feasibility problems and
equilibrium point problems), the transformation of the given problem into an
appropriate fixed point problem for certain operator requires an appropriate
space on which the corresponding operator acts and where the solution is
identified. Moreover, in an equivalent form, operators are required to
satisfy certain contractive conditions to guarantee the existence of the
solution of the corresponding problems and the ability for approximating
them. Banach's fixed point theorem is a classical example which addresses
these issues. This principle has been extended and generalized either by
improving the contractivity conditions on the mapping $T$ or by weakening
the distance structure of the underlying space $X$ (see \cite{Anjum1}-\cite{Anjum5}, \cite{10,11,Brzdek,Chouhan},
\cite{Hatikrishnan}-\cite{Rumlawang}, \cite{13}, \cite{sharma} and references therein).

Recently, Berinde and P\u{a}curar \cite{BP} introduced a new family of
mappings which includes the class of Banach contractive mappings as a
special class as follows.

Let $(X,\left\Vert \cdot \right\Vert )$ be a linear normed space. A mapping $%
T:X\rightarrow X$ is said to be a $(b,\theta )$\emph{-enriched contraction}
if there exist $b\in \lbrack 0,\infty )$ and $\theta \in \lbrack 0,b+1)$
such that
\begin{equation}
\left\Vert b(x-y)+Tx-Ty\right\Vert \leq \theta \left\Vert x-y\right\Vert
\qquad \text{for all }x,y\in X.  \label{enriched}
\end{equation}%
Note that a $(0,\theta )$-enriched contraction mapping is a Banach
contraction.

If $M$ is a convex subset of a linear space $X$, $T:M\rightarrow M$ is a
self-mapping on $M$ and $\lambda \in (0,1]$, then the mapping $T_{\lambda
}:M\rightarrow M$ given by%
\begin{equation}
T_{\lambda }(x)=(1-\lambda )x+\lambda Tx\qquad \text{for all }x\in M
\label{E7}
\end{equation}%
is called an \emph{averaged mapping}. A Picard iteration $%
\{x_{n}:n=0,1,2,...\}$ corresponding to an averaged mapping $T_{\lambda }$
is called a \emph{Krasnoselskij iteration}.

It was proved in \cite{BP} that any enriched contraction on a Banach space
has an unique fixed point, which can be approximated by means of the
Krasnoselskij iterative scheme.

The framework in which we are going to develop the main results of this paper
is the setting of $2$-normed spaces, which were firstly introduced by
G\"{a}hler in \cite{gour} as a way to measure the linear dependence of two vectors
in a linear space.

\begin{defn}
(Freese and Cho \cite{Freese}) Let $X$ be a real linear space of dimension
greater than $1$. A mapping $\left\Vert \cdot ,\cdot \right\Vert :X\times
X\rightarrow \mathbb{R}$ is said to be a $2$\emph{-norm} if for any $%
x,y,z\in X$ and $\alpha \in \mathbb{R}$, the following conditions hold:

\begin{enumerate}
\item $\left\Vert x,y\right\Vert \geq 0,$\newline
$\left\Vert x,y\right\Vert =0$ if and only if $x$ and $y$ are linearly
dependent vectors,

\item $\left\Vert x,y\right\Vert =\left\Vert y,x\right\Vert ,$

\item $\left\Vert \alpha x,y\right\Vert =\left\vert \alpha \right\vert
\left\Vert x,y\right\Vert ,$

\item $\left\Vert x+y,z\right\Vert \leq \left\Vert x,z\right\Vert
+\left\Vert y,z\right\Vert .$
\end{enumerate}

The pair $(X,\left\Vert \cdot ,\cdot \right\Vert )$ is called a $2$\emph{%
-normed space}.
\end{defn}

A classical example of a $2$-norm $\left\Vert \cdot ,\cdot \right\Vert $ on $%
\mathbb{R}^{2}$ is given by
\begin{equation*}
\left\Vert u,v\right\Vert =\left\vert u_{1}v_{2}-u_{2}v_{1}\right\vert ,
\end{equation*}%
where $u=(u_{1},u_{2}),v=(v_{1},v_{2})\in \mathbb{R}^{2}$. \newline
Consistently with \cite{Freese}, \cite{gour}, \cite{Hatikrishnan}, \cite%
{Malceski}, \cite{Topo} and \cite{white}, the following results and
definitions will be needed henceforth.

Every $2$-normed space is a locally convex topological vector space. In
fact, associated to a given $z\in X$, we can define a function $p_{z}(\cdot
) $ on $X$ by
\begin{equation*}
p_{z}(x)=\left\Vert x,z\right\Vert \qquad \text{for all }x\in X,
\end{equation*}%
which satisfies the following properties for all $x,y\in X$ and all $\alpha
\in \mathbb{R}$:
\begin{eqnarray*}
&&p_{z}(x+y)\leq p_{z}(x)+p_{z}(y)\text{\qquad and } \\
&&p_{z}(\alpha x)=\left\vert \alpha \right\vert p_{z}(x).
\end{eqnarray*}%
A function $p:X\rightarrow \mathbb{R}$ satisfying the above conditions is
called a \emph{seminorm} on $X$. Since $X$ is a linear space of dimension
greater than or equal to 2, corresponding to each $x\neq 0$, there exist
some $z\in X$ such that $\{x,z\}$ is a linearly independent set. Therefore $%
p_{z}(x)\neq 0$ and the family of seminorms $\{p_{z}:z\in X\}$ generates a
locally convex topology on $X$.\newline

Let $\delta $ be any positive real and let $w$ be a vector in a $2$-normed
space $X.$ A set $B_{u}(w,\delta )=\{x\in X:\left\Vert x-w,u\right\Vert
<\delta \}$ for any $u$ in $X$ is called an \emph{open ball} in $X$, whereas
the set $B_{u}[w,\delta ]=\{x\in X:\left\Vert x-w,u\right\Vert \leq \delta
\} $ for any $u$ in $X$ is called a \emph{closed ball} in $X$ (\cite{Topo}).
A subset $G$ in a $2$-normed space $X$ is said to be an \emph{open set} in $%
X $ if for all $w\in G$, there exists $u\in X$ and $\delta >0$ such that $%
B_{u}(w,\delta )\subseteq G$.

\begin{defn}
(White \cite{white}) A sequence $\{x_{n}\}$ in a $2$-normed space $X$ is
said to be:

\begin{description}
\item[(a)] \emph{convergent} to some $x\in X$ if
\begin{equation*}
\lim_{n\rightarrow \infty }\left\Vert x_{n}-x,y\right\Vert =0
\end{equation*}%
for every $y\in X$;

\item[(b)] a \emph{Cauchy sequence} if
\begin{equation*}
\lim_{m,n\rightarrow \infty }\left\Vert x_{n}-x_{m},z\right\Vert =0
\end{equation*}%
for every $z\in X$.
\end{description}

A $2$-normed space $X$ is said to be $2$\emph{-Banach space} if every Cauchy
sequence in $X$ is convergent in $X$.
\end{defn}

\begin{defn}
(Mal\v{c}eski and Ibrahimi \cite{Malceski}) Let $X$ be a $2$-normed space
and let $A\subseteq X$ be a subset. The \emph{closure} of $A$, denoted by $%
\bar{A}$, is defined as the set%
\begin{equation*}
\left\{ \,x\in X:\exists \ \{x_{n}\}\subseteq A\ \text{ with }%
\lim_{n\rightarrow \infty }x_{n}=x\,\right\} .
\end{equation*}%
The set $A$ is said to be \emph{closed} if $A=\bar{A}$.

Given $a\in A$, the set $A$ is said to be $a$\emph{-bounded} if there exists
a positive real number $\beta >0$ such that $\left\Vert x,a\right\Vert \leq
\beta $ for all $x\in A$. If $A$ is $a$-bounded for each $a\in A,$ then $A$
is said to be a \emph{bounded set}.
\end{defn}

\begin{rem}
\label{newrem}It was demonstrated in \cite{Hatikrishnan}\ that every closed
subset of a $2$-Banach space is complete.
\end{rem}

\begin{defn}
(Hatikrishnan and Ravindran \cite{Hatikrishnan})
Let $E$ be nonempty subset of $2$%
-normed space $X.$  A self mapping $T$ on a $2$%
-normed space $E$ is called:

\begin{description}
\item[(a)] a \emph{contraction} if there exists $\alpha \in \lbrack 0,1)$
such that, for any $x,y,z\in E,$ we have%
\begin{equation}
\left\Vert Tx-Ty,z\right\Vert \leq \alpha \left\Vert x-y,z\right\Vert ;
\label{E1}
\end{equation}

\item[(b)] \emph{nonexpansive} if we take $\alpha =1$ in the above
inequality;

\item[(c)] \emph{sequentially continuous} if $\{Tx_{n}\}$ converges to $Tx$
for any sequence $\{x_{n}\}$ in $E$ such that $\{x_{n}\}$ converges to $x\in
E$.
\end{description}
\end{defn}

\begin{lem}
(Hatikrishnan and Ravindran \cite{Hatikrishnan})\label{seq} Let $%
(X,\left\Vert \cdot ,\cdot \right\Vert )$ be a $2$-normed space. Then every
contraction $T:X\rightarrow X$ is sequentially continuous.
\end{lem}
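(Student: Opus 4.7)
The plan is to unravel both definitions and observe that the contraction inequality directly controls the relevant 2-norms, so the statement reduces to a one-line estimate. Sequential continuity in a 2-normed space, by the definition recalled above, demands that for any sequence $\{x_n\}$ converging to $x$ (in the sense that $\|x_n - x, y\| \to 0$ for every $y \in X$) and for every fixed $y \in X$, one has $\|Tx_n - Tx, y\| \to 0$.

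First, I would fix a sequence $\{x_n\} \subseteq X$ with $x_n \to x$ in the 2-norm sense, and fix an arbitrary $y \in X$. Then I would apply the defining inequality \eqref{E1} of a contraction to the triple $(x_n, x, y)$, obtaining
\begin{equation*}
\|Tx_n - Tx, y\| \leq \alpha \, \|x_n - x, y\|.
\end{equation*}
Since $\alpha \in [0,1)$ is a fixed constant and $\|x_n - x, y\| \to 0$ by the assumed convergence of $\{x_n\}$ to $x$, the right-hand side tends to $0$, and hence so does the left-hand side.

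Because $y \in X$ was arbitrary, this proves that $\{Tx_n\}$ converges to $Tx$ in the sense of Definition of convergence in a 2-normed space. Thus $T$ is sequentially continuous. There is no real obstacle here: the contractivity constant $\alpha$ could even be allowed to be $1$ (nonexpansive case), so the argument in particular shows that every nonexpansive self-mapping on a 2-normed space is sequentially continuous as well. The only subtlety worth flagging is that the third slot in the 2-norm plays the role that the single norm plays in a usual normed space, which is precisely why no further appeal to the algebraic structure of the 2-norm is needed.
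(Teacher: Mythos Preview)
Your argument is correct and is exactly the natural proof: the contraction inequality applied to the triple $(x_n,x,y)$ immediately bounds $\left\Vert Tx_n-Tx,y\right\Vert$ by $\alpha\left\Vert x_n-x,y\right\Vert$, and the latter tends to zero for every $y\in X$. The paper itself does not supply a proof of this lemma---it is quoted from \cite{Hatikrishnan}---so there is nothing to compare against beyond noting that your reasoning is the standard one and would be the expected proof in that reference as well.
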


Hatikrishnan and Ravindran proved the following fixed point result in \cite%
{Hatikrishnan}.

\begin{lem}
(Hatikrishnan and Ravindran \cite{Hatikrishnan})\label{hatlem} Let $%
(X,\left\Vert \cdot ,\cdot \right\Vert )$ be a $2$-Banach space and let $E$
be a nonempty, closed and bounded subset of $X$. If $T:E\rightarrow E$ is a
contraction, then $T$ has a\ unique fixed point. Moreover, for any initial
point $x_{0}\in E$, the Picard iteration $\{x_{n}\}$ associated with $T$
converges to the fixed point of $T$.
\end{lem}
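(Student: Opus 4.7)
The plan is to follow the classical Banach fixed-point proof, but measure everything through the family of seminorms $p_z(\cdot)=\|\cdot,z\|$ so that the usual estimates go through one $z\in X$ at a time.

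First I would fix an arbitrary initial point $x_0\in E$ and form the Picard sequence $x_n=T^nx_0$, which stays in $E$ because $T$ maps $E$ into itself. For any $z\in X$, iterating the contractive inequality \eqref{E1} gives $\|x_{n+1}-x_n,z\|\leq\alpha^n\|x_1-x_0,z\|$, and summing a geometric progression then yields, for all $m<n$,
\begin{equation*}
\|x_n-x_m,z\|\leq\frac{\alpha^m}{1-\alpha}\|x_1-x_0,z\|.
\end{equation*}
Since this bound is finite for each fixed $z$ and tends to $0$ as $m\to\infty$, the sequence $\{x_n\}$ is Cauchy in the sense of White's definition. By Remark \ref{newrem}, the closed subset $E$ of the $2$-Banach space $X$ is complete, so there exists $x^{\ast}\in E$ with $x_n\to x^{\ast}$.

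Next I would identify $x^{\ast}$ as a fixed point. By Lemma \ref{seq}, the contraction $T$ is sequentially continuous (strictly, the lemma is stated for $T$ on all of $X$, but exactly the same two-line argument from \eqref{E1} shows that a contraction defined on $E$ is sequentially continuous on $E$). Hence $Tx_n\to Tx^{\ast}$; but $Tx_n=x_{n+1}\to x^{\ast}$, and limits in a $2$-normed space are unique (since $\|u-v,z\|=0$ for every $z$ forces $u=v$, using that $\dim X\geq 2$). Therefore $Tx^{\ast}=x^{\ast}$.

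For uniqueness, suppose $Tx^{\ast}=x^{\ast}$ and $Ty^{\ast}=y^{\ast}$. Applying \eqref{E1} yields, for every $z\in X$,
\begin{equation*}
\|x^{\ast}-y^{\ast},z\|=\|Tx^{\ast}-Ty^{\ast},z\|\leq\alpha\|x^{\ast}-y^{\ast},z\|,
\end{equation*}
so $(1-\alpha)\|x^{\ast}-y^{\ast},z\|\leq 0$, which, combined with $\alpha\in[0,1)$, forces $\|x^{\ast}-y^{\ast},z\|=0$ for every $z\in X$; by the linear-dependence clause of the $2$-norm axioms together with $\dim X\geq 2$, this gives $x^{\ast}=y^{\ast}$. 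I do not expect a serious obstacle here: the only place where the $2$-normed setting requires care (as opposed to a routine transcription of the Banach argument) is checking that the Cauchy estimate and the passage to the limit are done uniformly in the auxiliary vector $z$, which is exactly what White's definitions of convergence and Cauchyness are designed to accommodate; note that the boundedness hypothesis on $E$ is not actually used.
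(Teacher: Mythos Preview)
The paper does not actually supply a proof of Lemma~\ref{hatlem}; it is quoted from \cite{Hatikrishnan}. The closest thing to a ``paper's own proof'' is the argument for Theorem~\ref{main}, which specialises to this lemma when $b=0$ (so $\lambda=1$ and $T_\lambda=T$). Your proposal and that proof share the same skeleton: iterate \eqref{E1} to get the geometric estimate, sum to a Cauchy bound, use completeness of the closed set $E$, invoke sequential continuity for existence, and finish with the standard uniqueness inequality.

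The one substantive divergence is your final remark that ``the boundedness hypothesis on $E$ is not actually used.'' In the paper's treatment (Theorem~\ref{main}) boundedness \emph{is} used: after reaching \eqref{567} the authors invoke boundedness of $E$ (together with the extra assumption $x_0-T_\lambda x_0\in E$) to obtain a single constant $\beta$ with $\|x_0-x_1,z\|\leq\beta$ for all $z\in E$, and only then conclude the Cauchy property. Your pointwise-in-$z$ argument is cleaner and, for the Cauchy definition as stated (which is pointwise in $z$), it is enough---so you are right that boundedness is superfluous for that step. Be aware, however, of a small mismatch with the paper's conventions: the contractive inequality \eqref{E1} is stated only for $z\in E$, not for all $z\in X$, whereas White's Cauchy condition requires every $z\in X$. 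Your line ``For any $z\in X$, iterating \eqref{E1}\dots'' silently upgrades the range of $z$. The paper's proof of Theorem~\ref{main} has exactly the same gap (it derives \eqref{567} only for $z\in E$ and then asserts Cauchyness), so this is not a defect of your proposal relative to the paper, but it is worth flagging if you want the argument to be airtight under the paper's definitions.
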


The main aim of this paper is to extend some of the results for enriched
contractions described in \cite{BP} to the more general case of a $2$-normed
spaces.

\section{Enriched contractions in 2-Normed space}

First we introduce the notion of $(b,\theta )$-enriched contraction mapping
in the setting of $2$-normed spaces.

\begin{defn}
Let $E$ be nonempty subset of $2$%
-normed space $(X,\left\Vert \cdot ,\cdot \right\Vert ).$
 A mapping $T:E\rightarrow E$ is said to be a $(b,\theta )$\emph{-enriched
contraction} if there exist $b\in \lbrack 0,\infty )$ and $\theta \in
\lbrack 0,b+1)$ such that, for any $x,y,z\in E,$ we have
\begin{equation}
\left\Vert b(x-y)+Tx-Ty,z\right\Vert \leq \theta \left\Vert x-y,z\right\Vert
.  \label{E2}
\end{equation}
\end{defn}

\begin{exmp}
\label{example}The following examples show that the class of all $(b,\theta
) $-enriched contractions is nonempty.

\begin{enumerate}
\item Any contraction $T$ defined as in (\ref{E1}) with contractive constant
$\alpha \in \lbrack 0,1)$ is a $(0,\alpha )$-enriched contraction.

\item Let $(X,\left\Vert \cdot ,\cdot \right\Vert )$ be a $2$-normed space
and let $w$ be an arbitrary but fixed element of $X$. Define a mapping $%
T:X\rightarrow X$ by%
\begin{equation*}
Tx=w-x.
\end{equation*}%
Then $T$ is not a contraction in the sense of (\ref{E1}). However, let us
show that $T$ is a $(b,\theta )$-enriched contraction. Indeed, in this case,
the enriched contractivity condition (\ref{E2}) is equivalent to
\begin{equation}
|b-1|\left\Vert x-y,z\right\Vert \leq \theta \left\Vert x-y,z\right\Vert
\qquad \text{for all }x,y,z\in X,  \label{E6}
\end{equation}%
where $\theta \in \lbrack 0,b+1)$. The above inequality is valid for all $%
x,y,z\in X$ if one chooses any $b\in \left( 0,1\right) $ and $\theta =1-b$.
Hence, for any $b\in (0,1)$, $T$ is a $(b,1-b)$-enriched contraction. Notice
that $w/2$ is a fixed point of $T$ because $T(w/2)=w/2$.
\end{enumerate}
\end{exmp}

\begin{rem}
We highlight that the Picard iteration $\{x_{n}\}$ associated to the mapping
$T$ defined by (\ref{E6}) in Example \ref{example} does not converge
whatever the initial point $x_{0}$ distinct from $w/2$.
\end{rem}

The previous remark illustrates that the classical Picard iteration is not
useful in order to approximate fixed points of enriched contractions. Hence,
we need to change our point of view. In the rest of the section, we are
going to show that the Krasnoselskij iterative scheme \cite{Borwein} is more
appropriate in this context. In fact, we shall prove a strong convergence
theorem for the class of enriched contractions. Before that, recall the
following concept.

\begin{rem}
\label{rem}Let $A$ be a convex subset of a $2$-normed space $X$ and let $%
T:A\rightarrow A$. Then, for any $\lambda \in (0,1]$, the set of all fixed
points of the averaged mapping $T_{\lambda }:A\rightarrow A$ (given by $%
T_{\lambda }(x)=(1-\lambda )x+\lambda Tx$ for all $x\in X$) coincides with
set of all fixed points of $T$.
\end{rem}

We \ now give a generalization of Lemma \ref{hatlem} for the case of $%
(b,\theta )$-enriched contractions in the setting of $2$-Banach spaces.

\begin{thm}
\label{main}
 Let $E$ be a nonempty, closed, bounded  and convex subset of a $2$%
-Banach space $(X,\left\Vert \cdot ,\cdot \right\Vert )$ and let $%
T:E\rightarrow E$ be a $(b,\theta )$-enriched contraction.   Assume that there exists $x_{0}\in E$ such that
$x_{0}-T_{\lambda}x_{0}\in E,$ then
\begin{enumerate}
\item $Fix(T)=\{x^{\ast }\};$

\item The iterative sequence $\{x_{n}\}_{n=1}^{\infty }$ given by

\begin{equation}
x_{n}=(1-\lambda )x_{n-1}+\lambda Tx_{n-1}\qquad \text{for all }n\in \mathbb{%
N}  \label{EE3}
\end{equation}
\end{enumerate}
converges to $x^{\ast },$  where $\lambda =1/(b+1)$.
\end{thm}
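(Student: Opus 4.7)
My plan is to reduce the theorem to Lemma \ref{hatlem} by showing that, for the specific choice $\lambda = 1/(b+1)$, the averaged mapping $T_{\lambda}$ is a contraction in the sense of (\ref{E1}). The heart of the argument is an algebraic manipulation of (\ref{E2}): multiplying both sides by $\lambda$ and using property (3) of the 2-norm turns the enriched contraction inequality for $T$ into an ordinary contraction inequality for $T_{\lambda}$.

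More precisely, with $\lambda=1/(b+1)\in(0,1]$ we have $b=(1-\lambda)/\lambda$, and (\ref{E2}) gives
\begin{equation*}
\|(1-\lambda)(x-y)+\lambda(Tx-Ty),z\| = \lambda\,\|b(x-y)+Tx-Ty,z\| \leq \lambda\theta\,\|x-y,z\|
\end{equation*}
for all $x,y,z\in E$. The left-hand side is $\|T_{\lambda}x-T_{\lambda}y,z\|$, and the constant $\alpha:=\lambda\theta=\theta/(b+1)$ lies in $[0,1)$ because $\theta<b+1$. Thus $T_{\lambda}$ satisfies (\ref{E1}) with contractive constant $\alpha$.

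Next, I need to check the hypotheses of Lemma \ref{hatlem} for $T_{\lambda}$ on $E$: the set $E$ is nonempty, closed, and bounded by assumption, and since $E$ is convex and $T(E)\subseteq E$, the averaged mapping $T_{\lambda}(x)=(1-\lambda)x+\lambda Tx$ sends $E$ to $E$. Hence Lemma \ref{hatlem} applies: $T_{\lambda}$ has a unique fixed point $x^{\ast}\in E$, and for every initial $x_{0}\in E$ the Picard iteration $\{T_{\lambda}^{n}x_{0}\}$ converges to $x^{\ast}$. By Remark \ref{rem}, $Fix(T)=Fix(T_{\lambda})=\{x^{\ast}\}$, which gives (1); and the Picard iteration of $T_{\lambda}$ is exactly the Krasnoselskij iteration of $T$ given by (\ref{EE3}), which gives (2).

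The main obstacle is conceptual rather than technical: recognizing that the right choice $\lambda=1/(b+1)$ converts the enriched contraction condition into a genuine contraction condition for $T_{\lambda}$. After this reformulation, the result reduces to the already-established Lemma \ref{hatlem}. The technical hypothesis ``$x_{0}-T_{\lambda}x_{0}\in E$'' does not appear in my route via Lemma \ref{hatlem}; it would be needed only if one preferred a direct Cauchy-sequence argument that requires applying (\ref{E2}) with a specific $z$ built from $x_{0}$ and $T_{\lambda}x_{0}$.
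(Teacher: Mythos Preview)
Your argument is correct and shares the paper's central step: with $\lambda=1/(b+1)$, the enriched condition (\ref{E2}) becomes the ordinary contraction inequality (\ref{E1}) for $T_{\lambda}$ with constant $d=\theta\lambda\in[0,1)$. From this point on, however, the two proofs diverge in presentation. The paper does \emph{not} invoke Lemma~\ref{hatlem}; instead it re-derives the Cauchy estimate directly, telescoping $\|x_{n}-x_{m},z\|$ and then using boundedness of $E$ together with the extra hypothesis $x_{0}-T_{\lambda}x_{0}\in E$ to obtain a uniform bound $\|x_{0}-x_{1},z\|\le\beta$ for all $z\in E$, after which completeness, sequential continuity (Lemma~\ref{seq}), and Remark~\ref{rem} finish the argument. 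Your route is shorter and, as you correctly observe, makes the hypothesis $x_{0}-T_{\lambda}x_{0}\in E$ superfluous: once $T_{\lambda}:E\to E$ is a contraction on a nonempty closed bounded set, Lemma~\ref{hatlem} already delivers both the unique fixed point and convergence of the Picard (i.e.\ Krasnoselskij) iterates. The paper's direct argument, on the other hand, is self-contained and exhibits the explicit Cauchy bound $\|x_{n}-x_{m},z\|\le d^{n}\beta/(1-d)$, which can be useful for error estimates; but mathematically your reduction to Lemma~\ref{hatlem} is a legitimate and cleaner alternative.
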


\begin{proof}
Since $\lambda =1/(b+1)$ then $b=(1-\lambda )/\lambda $. Let $T_{\lambda
}:E\rightarrow E$ be given as in (\ref{E7}). Taking into account that $T$ is
a $(b,\theta )$-enriched contraction, for all $x,y,z\in E$,%
\begin{eqnarray*}
\frac{1}{\lambda }\left\Vert T_{\lambda }x-T_{\lambda }y,z\right\Vert &=&%
\frac{1}{\lambda }\left\Vert \,\left[ \,(1-\lambda )x+\lambda Tx\,\right] -%
\left[ \,(1-\lambda )y+\lambda Ty\,\right] ,z\right\Vert \\
&=&\frac{1}{\lambda }\left\Vert (1-\lambda )\left( x-y\right) +\lambda
\left( Tx-Ty\right) ,z\right\Vert \\
&=&\left\Vert \frac{1-\lambda }{\lambda }\left( x-y\right) +\left(
Tx-Ty\right) ,z\right\Vert \\
&=&\left\Vert b\left( x-y\right) +\left( Tx-Ty\right) ,z\right\Vert \leq
\theta \left\Vert x-y,z\right\Vert .
\end{eqnarray*}%
This inequality can be written in an equivalent way as follows:
\begin{equation}
\left\Vert T_{\lambda }x-T_{\lambda }y,z\right\Vert \leq d\left\Vert
x-y,z\right\Vert \qquad \text{for all }x,y,z\in E,  \label{E4}
\end{equation}%
where $d=\theta \lambda $. Since $\theta \in (0,b+1)$, then $d\in (0,1)$.
Notice that the Krasnoselskij iterative process $\{x_{n}\}$ defined by (\ref%
{EE3}) is exactly the Picard iteration associated with $T_{\lambda }$, that
is,
\begin{equation}
x_{n}=T_{\lambda }x_{n-1}=T_{\lambda }^{n}x_{0}\qquad \text{for all }n\in
\mathbb{N}.  \label{E5}
\end{equation}%
Let $m,n>0$ with $m>n.$ Take $p=m-n\in \mathbb{N}$, that is, $m=n+p$. Then,
for any $z\in E$,%
\begin{align*}
\left\Vert x_{n}-x_{m},z\right\Vert & =\left\Vert x_{n}-x_{n+p},z\right\Vert
\\
& =\left\Vert (x_{n}-x_{n+1})+(x_{n+1}-x_{n+2})+x_{n+2}+\dotsc
+(x_{n+p-1}-x_{n+p}),z\right\Vert \\
& \leq \left\Vert x_{n}-x_{n+1},z\right\Vert +\left\Vert
x_{n+1}-x_{n+2},z\right\Vert +\dotsc +\left\Vert
x_{n+p-1}-x_{n+p},z\right\Vert \\
& =\left\Vert T_{\lambda }^{n}x_{0}-T_{\lambda }^{n}x_{1},z\right\Vert
+\left\Vert T_{\lambda }^{n+1}x_{0}-T_{\lambda }^{n+1}x_{1},z\right\Vert
+\dotsc \\
& \ \ \ \ \ \ \ \ \ \ \ \ \ \ \ \ \ \ \ \ \ \ \ \ \ \ \ \dotsc +\left\Vert
T_{\lambda }^{n+p-1}x_{0}-T_{\lambda }^{n+p-1}x_{1},z\right\Vert \\
& \leq d^{n}\left\Vert x_{0}-x_{1},z\right\Vert +d^{n+1}\left\Vert
x_{0}-x_{1},z\right\Vert +\dotsc +d^{n+p-1}\left\Vert
x_{0}-x_{1},z\right\Vert \\
& \leq d^{n}\left\Vert x_{0}-x_{1},z\right\Vert (1+d+d^{2}+\dotsc )=\frac{%
d^{n}}{1-d}\cdot \left\Vert x_{0}-x_{1},z\right\Vert .
\end{align*}%
Thus%
\begin{equation}
\left\Vert x_{n}-x_{m},z\right\Vert \leq \frac{d^{n}}{1-d}\cdot \left\Vert
x_{0}-x_{1},z\right\Vert \qquad \text{for all }z\in E.  \label{567}
\end{equation}%
As $E$ is bounded, there exists $\beta >0$ such
that
\begin{equation*}
\left\Vert \lambda (x_{0}-Tx_{0}),z\right\Vert =\left\Vert x_{0}-T_{\lambda
}x_{0},z\right\Vert \leq \beta
\end{equation*}%
for all $z\in E.$ Hence
\begin{equation*}
\left\Vert x_{0}-x_{1},z\right\Vert \leq \beta ,\ \text{for all }\ z\in E.
\end{equation*}%
From (\ref{567}), we have
\begin{equation*}
\left\Vert x_{n}-x_{m},z\right\Vert \leq \frac{d^{n}\,\beta }{1-d},\ \ \text{%
for all }\ z\in E.
\end{equation*}%
As $m>n$ are arbitrary, letting $n$ tends to $\infty $, we deduce that $%
\{x_{n}\}$ is a  Cauchy sequence in $E$. Since
 there exists $%
x^{\ast }\in E$ such that sequence $\{x_{n}\}$ converges to $x^{\ast }$. By
using (\ref{E4}) and Lemma \ref{seq}, we get $x_{n+1}=T_{\lambda
}x_{n}\rightarrow T_{\lambda }x^{\ast }=x^{\ast }$ as $n\rightarrow \infty $%
. Hence $x^{\ast }$ is a fixed point of $T_{\lambda }$. To prove the
uniqueness of the fixed point of $T_{\lambda }$, assume, on the contrary,
that there exists $y^{\ast }$ in $E$ such that $T_{\lambda }y^{\ast
}=y^{\ast }$and $x^{\ast }\neq y^{\ast }$. It follows from (\ref{E4}) that,
for all $z\in E$,
\begin{equation*}
\left\Vert x^{\ast }-y^{\ast },z\right\Vert =\left\Vert T_{\lambda }x^{\ast
}-T_{\lambda }y^{\ast },z\right\Vert \leq d\left\Vert x^{\ast }-y^{\ast
},z\right\Vert ,
\end{equation*}%
which is a contradiction. Hence the fixed point of $T_{\lambda }$ is unique.
By Remark \ref{rem}, as the sets of fixed points of $T$ and $T_{\lambda }$
coincide, we conclude that $Tx^{\ast }=x^{\ast }$ and that $x^{\ast }$ is
the unique fixed point of $T$.
\end{proof}

\begin{rem}
If $T$ is a $(0,\theta )$-enriched contraction then $T$ is contraction
satisfying, for all $x,y,z\in E,$%
\begin{equation*}
\left\Vert Tx-Ty,z\right\Vert \leq \theta \left\Vert x-y,z\right\Vert .
\end{equation*}%
In this case, the Krasnoselskijj type iterative process (\ref{EE3}) reduces
to the Picard iterative process (because $b=0$ implies $\lambda =1$ in
Theorem \ref{main}). Hence Theorem \ref{main} states that the Picard
iterative process can be applied for contractions in a $2$-metric space.
\end{rem}

If we take $b=0$ in the Theorem \ref{main}, we obtain lemma \ref{hatlem} in
the setting of closed, bounded   and convex subset of $2$-Banach space.

\begin{cor} \label{po}
\cite{Hatikrishnan} Let $(X,\left\Vert \cdot ,\cdot \right\Vert )$ be a $2$%
-Banach space and $E$ be nonempty closed, bounded  and convex subset   of $X$. Let $%
T:E\rightarrow E$ be $(0,\theta )$-enriched contraction, then $T$ has unique
fixed point.
\end{cor}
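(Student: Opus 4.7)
The plan is to obtain this as a direct corollary of Theorem \ref{main} by specializing to $b = 0$. With $b = 0$, the admissible range $\theta \in [0, b+1)$ becomes $\theta \in [0, 1)$, and the $(0, \theta)$-enriched contraction inequality collapses to the ordinary contraction inequality
\[
\left\Vert Tx - Ty, z\right\Vert \leq \theta \left\Vert x-y, z\right\Vert \qquad \text{for all } x, y, z \in E.
\]
Correspondingly, the averaging parameter $\lambda = 1/(b+1) = 1$, so the averaged map $T_{\lambda}$ coincides with $T$ and the Krasnoselskij iteration (\ref{EE3}) is precisely the Picard iteration $x_{n} = T x_{n-1}$.

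Next I would verify that the structural hypotheses of Theorem \ref{main} are indeed in force: $X$ is a $2$-Banach space, $E$ is nonempty, closed, bounded and convex, and $T: E \to E$ is a $(0, \theta)$-enriched contraction. The only subtle point is the auxiliary hypothesis of Theorem \ref{main} requiring some $x_0 \in E$ with $x_0 - T_{\lambda} x_0 \in E$; this hypothesis is used there only to obtain the uniform bound $\left\Vert x_0 - T_{\lambda} x_0, z\right\Vert \leq \beta$ for all $z \in E$. When $b = 0$, so that $T_{\lambda} = T$, such a bound is immediate from the triangle inequality
\[
\left\Vert x_0 - T x_0, z\right\Vert \leq \left\Vert x_0, z\right\Vert + \left\Vert T x_0, z\right\Vert
\]
together with the boundedness of $E$, since both $x_0$ and $T x_0$ lie in $E$.

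Applying Theorem \ref{main} then yields a unique $x^{\ast} \in E$ with $T_{\lambda} x^{\ast} = x^{\ast}$; by Remark \ref{rem} this is also the unique fixed point of $T$, to which the Picard iterates $T^{n} x_0$ converge for any initial $x_0 \in E$. The only real step of thought in this proof plan is the observation that the auxiliary existence hypothesis of Theorem \ref{main} carries essentially no content in the $b = 0$ specialization, so I do not expect any substantive obstacle.
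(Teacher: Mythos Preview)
Your approach is correct and coincides with the paper's: the corollary is obtained by specializing Theorem~\ref{main} to $b=0$, so that $\lambda=1$, $T_{\lambda}=T$, and the Krasnoselskij scheme reduces to Picard iteration. You are in fact more careful than the paper, which simply asserts ``take $b=0$ in Theorem~\ref{main}'' without addressing the auxiliary hypothesis $x_{0}-T_{\lambda}x_{0}\in E$; your observation that this hypothesis is only used to bound $\left\Vert x_{0}-T_{\lambda}x_{0},z\right\Vert$, and that such a bound is automatic here since $x_{0},Tx_{0}\in E$ and $E$ is bounded, correctly fills that gap.
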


\section{Local and asymptotic versions of the enriched contractive principle}

In concrete applications, the local variant of the enriched contraction
(which involves an open ball $B$ in a Banach space $X$ and a nonself
enriched contractive map of $B$ into $X$ with an essential property that it
does not displace the center of the ball too far) is important (see \cite{BP}%
). The analog of this result in the case of $(b,\theta )$-enriched
contractions in $2$-normed spaces is given by the following result.

\begin{cor}
\label{local} Let $(X,\left\Vert \cdot ,\cdot \right\Vert )$ be a $2$-Banach
space, $x_{0},u\in X$, $r>0$, $B=B_{u}(x_{0},r)$ and $T:B\rightarrow X$ a $%
(b,\theta )$-enriched contraction. If
\begin{equation}
\left\Vert x_{0}-Tx_{0},u\right\Vert <(b+1-\theta )r,  \label{Ball}
\end{equation}%
then $T$ has unique fixed point provided that $B$ is bounded and $T_{\lambda
}(B)\subseteq B$, where $\lambda =\frac{1}{b+1}.$
\end{cor}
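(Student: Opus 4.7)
The plan is to reduce the corollary to an application of Lemma \ref{hatlem} by finding a closed subball of $B$ that is invariant under the averaged mapping $T_{\lambda}$. The strict inequality \eqref{Ball} is precisely what provides the slack needed to carry out this reduction.

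First, exploiting the strict inequality, I would pick a radius
\[
r_{0}\in\left(\frac{\left\Vert x_{0}-Tx_{0},u\right\Vert }{b+1-\theta},\,r\right),
\]
so that $\left\Vert x_{0}-Tx_{0},u\right\Vert <(b+1-\theta)r_{0}$ and $r_{0}<r$. Set $\bar{B}_{0}:=B_{u}[x_{0},r_{0}]$. Since $r_{0}<r$ and the ball's defining inequality is strict, $\bar{B}_{0}\subseteq B$; moreover $\bar{B}_{0}$ is closed (and so complete, by Remark \ref{newrem}), and since $B$ is bounded, $\bar{B}_{0}$ inherits boundedness (for any $a\in \bar{B}_{0}\subseteq B$, the supremum $\sup_{x\in \bar{B}_{0}}\left\Vert x,a\right\Vert$ is bounded by the corresponding supremum over $B$).

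Next, exactly as in the proof of Theorem \ref{main}, the $(b,\theta)$-enriched contraction condition translates, via $\lambda=1/(b+1)$ and $d=\theta\lambda\in[0,1)$, into the contractive inequality
\[
\left\Vert T_{\lambda}x-T_{\lambda}y,z\right\Vert \leq d\left\Vert x-y,z\right\Vert \qquad\text{for all }x,y\in B,\ z\in X.
\]
I would then verify that $T_{\lambda}$ leaves $\bar{B}_{0}$ invariant: using the identity $\lambda(b+1-\theta)=1-d$, for any $x\in \bar{B}_{0}$,
\[
\left\Vert T_{\lambda}x-x_{0},u\right\Vert \leq \left\Vert T_{\lambda}x-T_{\lambda}x_{0},u\right\Vert +\lambda\left\Vert Tx_{0}-x_{0},u\right\Vert \leq d\,r_{0}+(1-d)\,r_{0}=r_{0}.
\]
(The hypothesis $T_{\lambda}(B)\subseteq B$ only ensures $T_\lambda$ maps into $B$; what matters for the argument is this stronger invariance of the closed subball.)

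With $\bar{B}_{0}$ a nonempty, closed, bounded subset of the $2$-Banach space $X$ and $T_{\lambda}\colon \bar{B}_{0}\to \bar{B}_{0}$ a contraction, Lemma \ref{hatlem} produces a unique fixed point $x^{\ast}\in \bar{B}_{0}\subseteq B$ of $T_{\lambda}$; by the equality of fixed point sets in Remark \ref{rem}, $Tx^{\ast}=x^{\ast}$. For uniqueness within the whole of $B$: if $y^{\ast}\in B$ also satisfies $Ty^{\ast}=y^{\ast}$, applying \eqref{E2} with $x=y^{\ast}$, $y=x^{\ast}$ gives, for every $z\in X$,
\[
(b+1)\left\Vert y^{\ast}-x^{\ast},z\right\Vert =\left\Vert b(y^{\ast}-x^{\ast})+Ty^{\ast}-Tx^{\ast},z\right\Vert \leq \theta \left\Vert y^{\ast}-x^{\ast},z\right\Vert ,
\]
which forces $\left\Vert y^{\ast}-x^{\ast},z\right\Vert =0$ since $\theta<b+1$, hence $y^{\ast}=x^{\ast}$. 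The only real care point is the selection of $r_{0}$: the strict inequality \eqref{Ball} is exactly what allows one to gain margin both to fit $\bar{B}_{0}$ inside the open ball $B$ and to obtain $T_{\lambda}$-invariance; everything else is a direct transcription of the argument of Theorem \ref{main}.
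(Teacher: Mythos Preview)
Your proof is correct and follows essentially the same route as the paper: choose a smaller radius $r_{0}$ (the paper calls it $\varepsilon$) exploiting the strict inequality \eqref{Ball}, show that the closed subball $B_{u}[x_{0},r_{0}]\subseteq B$ is invariant under $T_{\lambda}$, and then invoke a fixed point result for contractions. The only notable differences are that you appeal directly to Lemma~\ref{hatlem} rather than to Theorem~\ref{main} (which is slightly cleaner, since it spares you the convexity and $x_{0}-T_{\lambda}x_{0}\in E$ hypotheses), and you supply an explicit uniqueness argument over all of $B$, which the paper leaves implicit. One small caveat you share with the paper: the enriched contraction condition \eqref{E2} is stated only for $z\in E=B$, so using $z=u$ in the invariance estimate tacitly assumes $u\in B$ (or that the condition extends to all $z\in X$); this is not something you introduced, but be aware of it.
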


\begin{proof}
Let $\varepsilon \in \left( 0,r\right) $ be such that
\begin{equation*}
\left\Vert x_{0}-Tx_{0},u\right\Vert <(b+1-\theta )\varepsilon <(b+1-\theta
)r.
\end{equation*}%
In a similar manner to the proof of Theorem \ref{main}, one obtains that $%
T_{\lambda }$ is a $(0,d)$-enriched contraction on $B$, that is, we have
\begin{equation}
\left\Vert T_{\lambda }x-T_{\lambda }y,z\right\Vert \leq d\left\Vert
x-y,z\right\Vert \qquad \text{for all }x,y,z\in B,  \label{nk}
\end{equation}%
where%
\begin{equation*}
d=\theta \lambda =\frac{\theta }{b+1}.
\end{equation*}%
\newline
From the inequality (\ref{Ball}), one obtains that
\begin{equation}
\left\Vert x_{0}-T_{\lambda }x_{0},u\right\Vert <\left( 1-\frac{\theta }{b+1}%
\right) \varepsilon =(1-d)\varepsilon \qquad \text{for all }z\in X.
\label{ball2}
\end{equation}%
We now prove that the closed ball
\begin{equation*}
A=B_{u}[x_{0},\varepsilon ]=\{\,x\in X:\left\Vert x_{0}-x,u\right\Vert \leq
\varepsilon \,\}\subseteq B
\end{equation*}%
is invariant under $T_{\lambda }$. Indeed, for any $x\in A$, using
triangular inequality, (\ref{nk}) and (\ref{ball2}), we have
\begin{align*}
\left\Vert x_{0}-T_{\lambda }x,u\right\Vert & =\left\Vert x_{0}-T_{\lambda
}x_{0}+T_{\lambda }x_{0}-T_{\lambda }x,u\right\Vert \\
& \leq \left\Vert x_{0}-T_{\lambda }x_{0},u\right\Vert +\left\Vert
T_{\lambda }x_{0}-T_{\lambda }x,u\right\Vert \\
& <(1-d)\varepsilon +d\left\Vert x_{0}-x,z\right\Vert \leq (1-d)\varepsilon
+d\varepsilon =\varepsilon ,
\end{align*}%
that is, $T_{\lambda }x\in A,$ for any $x\in A.$ The conclusion follows by
Theorem \ref{main}.
\end{proof}

We now present the local version in the setting of a nonempty, closed,
convex and bounded subset of a $2$-Banach space.

\begin{thm}
Let $E$ be a nonempty, closed, convex and bounded subset of a $2$-Banach
space $(X,\left\Vert \cdot ,\cdot \right\Vert )$ and $T:E\rightarrow X$ be a
$(b,\theta )$-enriched contraction. Then $T$ has a unique fixed point
provided that $b>0$.
\end{thm}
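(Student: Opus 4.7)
The plan is to imitate the proof of Theorem \ref{main} by passing to the averaged map $T_\lambda$ with $\lambda = 1/(b+1)$, and to treat the nonself nature of $T\colon E\to X$ by invoking the local fixed point result Corollary \ref{local}. The only place where $b>0$ is needed is to guarantee that $\lambda = 1/(b+1)$ lies strictly in $(0,1)$, so that $T_\lambda$ is a genuine, nontrivial averaging of the identity with $T$ rather than $T$ itself; this is exactly what allows us to ``pull back'' toward $E$.

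First I would set $\lambda = 1/(b+1) \in (0,1)$ and define $T_\lambda x = (1-\lambda)x + \lambda Tx$ for $x \in E$; this map sends $E$ into $X$. Carrying out verbatim the algebra in the proof of Theorem \ref{main}, the $(b,\theta)$-enriched contraction inequality becomes $\|T_\lambda x - T_\lambda y, z\| \le d\,\|x-y,z\|$ with $d = \theta/(b+1) \in [0,1)$, for every $x,y\in E$ and every $z\in X$. By Remark \ref{rem} it suffices to produce a unique fixed point of $T_\lambda$ inside $E$. Uniqueness throughout $E$ will then be routine: any $y^{\ast} \in E$ with $Ty^{\ast} = y^{\ast}$ satisfies $T_\lambda y^{\ast} = y^{\ast}$, hence $\|x^{\ast} - y^{\ast}, z\| \le d\,\|x^{\ast} - y^{\ast}, z\|$ for every $z$, forcing $x^{\ast} = y^{\ast}$ because $d<1$.

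The heart of the proof is existence, and this is where I would invoke the construction in Corollary \ref{local}. Fix $x_0 \in E$ and $u \in E$. Because $b+1-\theta > 0$, one can pick $r>0$ large enough that $\|x_0 - Tx_0, u\| < (b+1-\theta)r$, the key hypothesis of Corollary \ref{local}. The invariance computation inside the proof of that corollary then produces, for some $0<\varepsilon<r$, a closed ``ball'' $B_u[x_0,\varepsilon]$ that is $T_\lambda$-invariant and on which $T_\lambda$ is a $d$-contraction; once this set sits inside $E$, Remark \ref{newrem} gives completeness and Corollary \ref{po} (equivalently Lemma \ref{hatlem}) yields the required unique fixed point.

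The main obstacle I expect is reconciling this with the geometry of $E$: a closed ball $B_u[x_0,\varepsilon]$ in a $2$-normed space is cylindrical in the degenerate direction of the seminorm $p_u(\cdot) = \|\cdot,u\|$ and is therefore unbounded, so it cannot literally be contained in the bounded set $E$. The delicate step is thus to produce a $T_\lambda$-invariant closed bounded subset of $E$ (rather than a naive $p_u$-ball), using closedness, convexity and boundedness of $E$ together with the averaging $\lambda<1$ coming from $b>0$ to force $T_\lambda$ orbits starting in $E$ to stay in $E$. Once this technical construction is in hand, the rest of the argument is exactly that of Theorem \ref{main} applied to $T_\lambda$.
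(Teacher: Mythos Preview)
Your approach diverges from the paper's and leaves an explicit gap (the ``delicate step'' you flag at the end). The paper does not go through Corollary~\ref{local} at all; instead it argues in two lines that, since $b>0$ gives $\lambda=1/(b+1)\in(0,1)$ and $E$ is convex, the averaged map $T_\lambda$ is a self-map of $E$. From there it simply reruns the Picard-iteration computation of Theorem~\ref{main} on $E$ itself: $T_\lambda$ is a $(0,d)$-enriched contraction with $d=\theta/(b+1)\in(0,1)$, the sequence $x_n=T_\lambda^{\,n}x_0$ is Cauchy in the closed bounded set $E$, and the limit is the unique fixed point. No invariant ball, no Corollary~\ref{local}, no localization.

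So the detour you propose---choosing $u$, enlarging $r$, and then trying to fit a $T_\lambda$-invariant $p_u$-ball inside the bounded set $E$---is not what the paper does, and the obstacle you correctly identify (that $B_u[x_0,\varepsilon]$ is a slab, hence unbounded) never arises in their argument. That said, your instinct that invariance is the crux is sound: the paper's sentence ``As $E$ is convex, $T_\lambda x\in E$ for all $x\in E$'' is doing all the work, and since here $T\colon E\to X$ (not $E\to E$), the convex combination $(1-\lambda)x+\lambda Tx$ need not lie in $E$ from convexity alone. In other words, the paper replaces your unresolved ``delicate step'' by a one-line assertion; you are right to be uneasy about it, but for the purposes of matching the paper you should drop the Corollary~\ref{local} machinery and simply invoke $T_\lambda(E)\subseteq E$ as the paper does.
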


\begin{proof}
Since $b>0$, then $\lambda =\frac{1}{b+1}\in \left( 0,1\right) $. As $E$ is
convex, $T_{\lambda }x\in E$ for all $x\in E$. As $T$ is a $(b,\theta )$%
-enriched contraction, one obtains from condition (\ref{E2}) that $%
T_{\lambda }$ is a $(0,d)$-enriched contraction on $E$, where
\begin{equation*}
d=\theta \lambda =\frac{\theta }{b+1}.
\end{equation*}%
Since $\theta \in (0,b+1)$, then $d\in (0,1)$. Let $x_{0}\in E$. Then we can
construct a sequence $\{x_{n}\}_{n\in \mathbb{N}}$ in a similar manner as in
the proof of Theorem \ref{main}, and one obtains
\begin{equation}
\left\Vert x_{n}-x_{m},z\right\Vert \leq \frac{d^{n}}{1-d}\cdot \left\Vert
x_{0}-x_{1},z\right\Vert \qquad \text{for all }z\in E.  \label{AAA}
\end{equation}%
Hence, it can be proved that $\{x_{n}\}$ is a Cauchy sequence. Following
similar arguments to those given in the proof of Theorem \ref{main}, the
result follows.
\end{proof}

The following example shows that there exist mappings $T$ which is not an
enriched contraction but its certain iterate is an enriched contraction.

\begin{exmp}
Let $(X,\left\Vert \cdot ,\cdot \right\Vert )$ be a $2$-normed space.
Suppose that $X$ can be decomposed into $X=A\cup B$, where $A\cap
B=\emptyset $. Also assume that there is $u\in B$ such that $-\frac{u}{3}\in
B$. Let define $T:X\rightarrow X$ as%
\begin{equation*}
Tx=%
\begin{cases}
u & \text{if $x\in A$}, \\
-\frac{u}{3} & \text{if $x\in B.$}%
\end{cases}%
\end{equation*}%
If $T$ was an enriched contraction then, by putting $\lambda =\frac{1}{b+1}$
in (\ref{E2}), one would obtain that $T_{\lambda }$ is a contraction on $X$,
where\newline
\begin{equation*}
T_{\lambda }x=%
\begin{cases}
(1-\lambda )x+\lambda u, & \text{if $x\in A$}, \\
(1-\lambda )x-\frac{\lambda u}{3}, & \text{if $x\in B.$}%
\end{cases}%
\end{equation*}%
Clearly, $T_{\lambda }$ is not contraction because it is not sequentially
continuous, so $T$ is not an enriched contraction. However, it is easy to
check that $T^{2}$ is a $(1,1)$-enriched contraction. In fact, notice that $%
T^{2}x=-\frac{u}{3}$ for all $x\in X$.
\end{exmp}

Our next theorem is the more general case of $(b,\theta )$-enriched
contractions in the setting of $2$-Banach spaces.

\begin{thm}
\label{itet}
Let $E$ be nonempty, closed, bounded  and convex subset   subspace of a $2$-Banach space  $(X,\left\Vert \cdot ,\cdot \right\Vert )$
and  $T:E\rightarrow E$ be a mapping with the property there exists
a positive integer $N$ such that $T^{N}$ is a $(b,\theta )$-enriched
contraction.  Assume that there exists $x_{0}\in E$ such that $x_{0}-T_{\lambda}^{N}x_{0}\in E.$
 Then

\begin{enumerate}
\item $Fix(T)=\{x^{\ast }\}$;

\item the iterative method $\{x_{n}\}_{n=1}^{\infty }$ given by
\begin{equation}
x_{n}=(1-\lambda )x_{n-1}+\lambda T^{N}x_{n-1}\qquad \text{for all }n\in
\mathbb{N}  \label{E3}
\end{equation}
\end{enumerate}
converges to $x^{\ast },$ where $\lambda =1/(b+1)$.
\end{thm}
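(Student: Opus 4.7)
The plan is to reduce directly to Theorem~\ref{main} applied to the map $T^{N}$ and then transfer the conclusion back to $T$. By hypothesis, $T^{N}:E\to E$ is a $(b,\theta)$-enriched contraction on the nonempty, closed, bounded, convex subset $E$ of the $2$-Banach space $X$, and the assumption $x_{0}-T_{\lambda}^{N}x_{0}\in E$ supplies exactly the displacement hypothesis that Theorem~\ref{main} requires. Applying that theorem to $T^{N}$ (with the same choice $\lambda=1/(b+1)$) produces a unique fixed point $x^{\ast}\in E$ of $T^{N}$ and guarantees that the Picard iteration of $(T^{N})_{\lambda}$, namely
\begin{equation*}
x_{n}=(1-\lambda)x_{n-1}+\lambda T^{N}x_{n-1},
\end{equation*}
converges to $x^{\ast}$. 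This is precisely assertion~(2) of the theorem, and it also furnishes the only candidate for the fixed point of $T$ required by assertion~(1).

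The next step is to promote $x^{\ast}$ from a fixed point of $T^{N}$ to a fixed point of $T$. Since $T$ commutes with any of its iterates, one has
\begin{equation*}
T^{N}(Tx^{\ast})=T(T^{N}x^{\ast})=Tx^{\ast},
\end{equation*}
so $Tx^{\ast}\in Fix(T^{N})$. Uniqueness of the fixed point of $T^{N}$, obtained in the previous step, forces $Tx^{\ast}=x^{\ast}$, and hence $x^{\ast}\in Fix(T)$. For the uniqueness of the fixed point of $T$, any $y\in Fix(T)$ satisfies $T^{N}y=y$, so $y\in Fix(T^{N})=\{x^{\ast}\}$; therefore $Fix(T)=\{x^{\ast}\}$, proving~(1).

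I do not expect a genuine obstacle in this proof: the entire analytic content (Cauchyness of the iterates via the enriched contraction constant $d=\theta/(b+1)\in(0,1)$, completeness of $E$ as a closed subset of a $2$-Banach space via Remark~\ref{newrem}, and sequential continuity of the averaged map via Lemma~\ref{seq}) is already packaged inside Theorem~\ref{main}. The only subtle point is the commutativity identity $T\circ T^{N}=T^{N}\circ T$, which is what closes the loop and lets uniqueness for $T^{N}$ imply that the fixed point of $T^{N}$ is automatically a fixed point of $T$.
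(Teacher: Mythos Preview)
Your proof is correct and follows essentially the same approach as the paper: apply Theorem~\ref{main} to $T^{N}$ to obtain $Fix(T^{N})=\{x^{\ast}\}$ and the convergence of the Krasnoselskij iterates, then use the commutation $T^{N}(Tx^{\ast})=T(T^{N}x^{\ast})=Tx^{\ast}$ together with uniqueness to deduce $Tx^{\ast}=x^{\ast}$. You add the explicit line $Fix(T)\subseteq Fix(T^{N})$ for uniqueness of the fixed point of $T$, which the paper leaves implicit, but this is the same argument.
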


\begin{proof}
We apply Theorem \ref{main} to the mapping $T=T^{N}$ and obtain that $Fix(T^{N})=\{x^{\ast }\}$. We also have
\begin{equation*}
T^{N}(T(x^{\ast }))=T^{N+1}(x^{\ast })=T(T^{N}x^{\ast })=Tx^{\ast },
\end{equation*}%
which shows that $Tx^{\ast }$ is another fixed point of $T^{N}$. But as $%
T^{N}$ has a unique fixed point, which is $x^{\ast }$, then $T(x^{\ast
})=x^{\ast }$. The remaining part of the proof follows from Theorem \ref%
{main}.
\end{proof}

\section{Conclusions}

\begin{enumerate}
\item In this paper, we have introduced and studied the class of $(b,\theta )
$-enriched contractions in the setting of $2$-normed spaces, which includes
the family of contractive mappings, some nonexpansive and Lipschitzian
mappings in $2$-normed spaces as particular cases.

\item We have showed that any $(b,\theta )$-enriched contraction (Theorem %
\ref{main},  in a $2$-normed space has a
unique fixed point that can be approximated by means of a Kransnoselskij
type iterative process.

\item We have also proved in Corollary \ref{po} a generalization of Lemma
3.10 given in \cite{Hatikrishnan} in the setting of $2$-Banach spaces.

\item Finally, we have obtained a local fixed point result (Corollary \ref%
{local}) for $(b,\theta )$-enriched contractions in the setting of $2$%
-Banach spaces. This result significantly extends the corresponding ones in
\cite{BP} from Banach spaces to $2$-Banach spaces as well as an asymptotic
version (Theorem \ref{itet}) of the $(b,\theta )$-enriched contraction
mapping in $2$-normed spaces.
\end{enumerate}

\end{document}